\documentclass[Threeside,11pt,tikz,border=3mm]{article}
\usepackage{etoolbox}
\makeatletter
\patchcmd{\thebibliography}
{\list}
{\small          
	\setstretch{1.2}
	\list}
{}{}
\makeatother

\usepackage{setspace} 
\usepackage{tikz}
\usepackage{tikz-cd}
\usetikzlibrary{arrows.meta}
\usepackage{subcaption}
\usepackage [latin1]{inputenc}
\usepackage[english]{babel}
\usepackage{amsmath,amscd}
\usepackage{amsthm}
\usepackage{algorithm}
 \usepackage{adjustbox}
\usepackage{cite}
\usepackage{amsfonts}
\usepackage{amssymb}
\usepackage{mathrsfs}
\usepackage{graphicx}
\usepackage{colortbl,dcolumn}
\usepackage{marvosym}
\usepackage{ifsym}
\usepackage{paralist}
\usepackage{xcolor}
\usepackage{array,color}
\usepackage{geometry}
\usepackage[colorlinks,
citecolor=blue,
linkcolor=blue,
backref=page
allcolors=black]{hyperref} 
\usetikzlibrary{shapes.geometric,arrows.meta,calc}
\usetikzlibrary{arrows, decorations.markings}
\definecolor{mydeepgreen}{HTML}{017A79}
\graphicspath{{figures/}}
\allowdisplaybreaks
       \newtheorem{lemma}{\bf Lemma}[section]
       \newtheorem{theorem}{\bf Theorem}[section]

       \newtheorem{remark}{\bf Remark}[section]

       \numberwithin{equation}{section}

\usepackage{enumitem}
\begin{document}
\title{{\sl Exponential convergence can happen in weighted Birkhoff averages via quasi-periodicity with arbitrary nonresonance and low regularity}}
\author{Zhicheng Tong $^{\mathcal{z}}$}

\renewcommand{\thefootnote}{}
\footnotetext{\hspace*{-6mm}
\begin{tabular}{l   l}	$^\mathcal{z}$~School of Mathematics, Jilin University, Changchun 130012, P. R.  China. \url{tongzc25@jlu.edu.cn}
\end{tabular}}

\date{}
\maketitle

\begin{abstract}
Since Krengel's work \cite{MR0510630} in 1978, it has been widely known that no effective rate of convergence exists in the ergodic theorem. For toral translations, however, by choosing appropriate weights one can accelerate the convergence of ergodic averages to an exponential rate, but this intuitively requires both highly nonresonant frequencies and very regular observables. In this paper, we uncover a new phenomenon: even for any given nonresonant frequency, there exists a non-trivial family of weights and observables of low regularity such that the weighted Birkhoff averages along quasi-periodic orbits converge at a quantitative, uniform, and exponential rate. This not only yields a finer understanding of the deep interaction between nonresonance and regularity in ergodic theory, but also stands as a weighted counterpart to a Yoccoz-type result \cite{MR604672,MR1367354}.\\
	\\
	{\bf Keywords:} {Weighted Birkhoff averages, exponential convergence, arbitrary nonresonance, observables with low regularity}\vspace{2mm}\\
	{\bf2020 MSC codes:} {37C55, 37A25, 37A30, 37A44, 37A46}
\end{abstract}


\section{Introduction}
\subsection{Background and motivation}
\renewcommand{\thefootnote}{\fnsymbol{footnote}}
A fundamental problem in dynamical systems is determining the rate of convergence in the well-known ergodic theorem. However, in 1978, Krengel \cite{MR0510630} demonstrated that an effective rate of convergence cannot be expected in general: there exist cases in which the Birkhoff averages converge arbitrarily slowly.   Similar results were subsequently obtained by del Junco--Rosenblatt \cite{MR553340} and Kakutani--Petersen \cite{MR612450}, and recent advances were presented by Ryzhikov \cite{MR4602432,Ryzhikov2,Ryzhikov3,Ryzhikov4}. In 1996, Kachurovski\u{\i} \cite{MR1422228} established that for any non-trivial (non-constant) observable the $ N $-step Birkhoff average converges no faster than $ \mathcal{O} (N^{-1} ) $, and a host of subsequent advancements were obtained by  Kachurovski\u{\i}--Podvigin  \cite{MR3643963,MR3981324} and  Podvigin \cite{MR4440287,Podvigin2,MR4767798}. Collectively, these results reveal that, \textit{in general}, the ergodic theorem exhibits an intrinsic---indeed, unimprovable---slowness of convergence. Even for the \textit{concrete} setting of toral rotations, Yoccoz \cite{MR604672,MR1367354} showed that for every analytic observable there exist strongly Liouvillean (non-Diophantine) frequency vectors forcing the Birkhoff averages to converge arbitrarily slowly. 
	
	All of the previous observations immediately raise a pivotal question: can the classical Birkhoff average be accelerated by appropriate weighting?  In general, when the weights are uniform (i.e.,  independent of the time scale), weighted Birkhoff averages appear incapable of rapid convergence, see, for instance, Bellow--Losert \cite{MR773063}, Bellow--Jones--Rosenblatt \cite{MR1170516},  Durand--Schneider \cite{MR1906483},  Lin--Weber \cite{MR2308143},    Fan--Schmeling \cite{MR3897972},  Fan \cite{MR3928616}  and the references therein.  Attention therefore shifted to non-uniform weights---those that depend explicitly on the time scale.

In the 1990s, Laskar \cite{MR1234445,MR1222935,MR1720890} devised such a weighting scheme.  Numerically, it produced surprising \textit{exponential} acceleration for quasi-periodic systems, yet no proof was provided\footnote{Although Laskar did not provide a convergence analysis for weighted Birkhoff averages, a rigorous theoretical justification for his frequency map analysis was established in \cite{MR1720890}.}, nor was its scope explored beyond a handful of systems.  Despite this theoretical vacuum, Laskar's method and its variants became a workhorse in computational dynamics, a status they retain to this day. A relatively comprehensive overview is provided in \cite{TL-DP}. A rigorous proof for weighted Birkhoff averages came in 2017, when Das--Saiki--Sander--Yorke \cite{MR3718733} (see also Das--Yorke \cite{MR3755876}) established the first arbitrary polynomial convergence rate in the  discrete-time quasi-periodic setting.  In 2023, Duignan--Meiss \cite{MR4582163} extended the result to the continuous-time case. Subsequently, the present author and his collaborator 	\cite{MR4768308,TLdecaying,TL-DP,TLOPT}  established relatively comprehensive results involving almost-periodicity and exponential convergence.

Most of these rapid convergence results, however, still impose strong nonresonance (e.g., Diophantine condition) and high regularity (e.g., at least $ C^m $ with $ m $ sufficiently large). From a dynamical viewpoint, this is somewhat  coarse and merely broadly sufficient. This omission leaves the delicate interplay between nonresonance and regularity unexplored---precisely the issue at the heart of dynamical systems. Comparable questions have been studied in depth within many dynamical theories including KAM theory and linearization theory\footnote{But these differ from the intrinsic interplay between nonresonance and regularity that we seek in weighted Birkhoff averages, owing to the effect of infinitely many iterations.}; see, for instance, Brjuno \cite{MR377192}, Katznelson \cite{MR581808}, Yoccoz \cite{MR929279}, Salamon \cite{MR2111297}, Khanin--Teplinsky \cite{MR2545684}, Cheng--Wang \cite{MR3061774}, Tong--Li \cite{TLCCM} and the references therein.

\textit{This paper is therefore motivated by the desire to investigate, in depth, the interaction between nonresonance and regularity in the aforementioned Laskar-type results for quasi-periodic dynamical systems.} By way of contrast, to achieve the  fastest possible rate of convergence $ \mathcal{O} (N^{-1} ) $ in the unweighted case, a cohomological equation argument \cite{MR2039503} asserts that for any given nonresonant frequency it is sufficient to require rapid decay of the observable's Fourier coefficients near ``almost-rational'' modes. In the non-uniformly weighted case in which we are more concerned, under the hypotheses employed in \cite{MR3718733,MR3755876,MR4582163,MR4768308} the same philosophy yields arbitrary polynomial convergence, yet this point went unnoticed in earlier studies. \textit{Exponential convergence, however, has remained an open and intrinsically difficult question,} while Theorem \ref{RET1} in this paper fills the gap. To be more precise, our main discovery is that for any given nonresonant frequency, there exists a family of weights (close in spirit yet distinct from Laskar's original choice) and observables of low regularity, such that the weighted Birkhoff averages converge to the spatial averages at a uniform, quantitative, and exponential rate.  In earlier work, even when the frequency vector satisfied a Diophantine condition, analyticity or Gevrey-class regularity was typically imposed to ensure exponential convergence in a universal sense.  Therefore, from a perspective of   dynamical systems, this new contribution provides a quantitative characterization of how nonresonance and regularity interact, and it can be regarded as a counterpart to a Yoccoz-type  result \cite{MR604672,MR1367354}, in the weighted sense.

\subsection{Main result}
We start with some notation. For $ d \in \mathbb{N}^+ $, let $\mathbb T^d=\mathbb R^d/\mathbb Z^d$ denote the standard $d$-dimensional torus equipped with the normalized Lebesgue measure $\mu$.  A mapping ${\mathscr{T}}_\alpha:\mathbb{T}^d\to\mathbb{T}^d$ is called a \textit{toral translation} with frequency vector $\alpha\in\mathbb{R}^d$ if it is defined by ${\mathscr{T}}_\alpha(\theta)=\theta+\alpha \pmod{\mathbb{Z}^d} $ for all $ \theta \in \mathbb{T}^d $. In this context, the frequency vector $\alpha$ is said to be \textit{nonresonant} (or rationally independent) if $\langle k,\alpha \rangle:=\sum_{j=1}^dk_j\alpha_j\notin \mathbb{Z}$ for all $k\in\mathbb{Z}^d\setminus\{0\}$.

	Let $(\mathscr{B},\|\cdot\|_{\mathscr{B}})$ be a Banach space. For brevity, we denote it by $\mathscr{B}$ when no confusion arises. A  mapping $g:\mathbb{R}\to \mathscr B$ is said to be \textit{quasi-periodic} if there exist a continuous mapping  $F:\mathbb{T}^d\to \mathscr B$ and a nonresonant frequency vector $\alpha\in\mathbb{R}^d$ such that $g(t)=F(\alpha t)$ for all $t\in\mathbb{R}$. In particular, in the discrete-time setting where $t=n\in\mathbb{Z}$, the expression $F(\alpha n)$ can be rewritten in the form of a toral translation $F({\mathscr{T}}_\alpha^n(0))$. For the sake of clarity, we shall adopt the latter notation throughout this paper. Furthermore, one may also consider dynamical systems that are smoothly conjugate to quasi-periodic ones, as depicted in Figure \ref{FIG1} (for the case $d=1$). We remark in advance that although the main result of this paper, namely Theorem \ref{RET1} below, is formulated in terms of toral translations, it can be directly applied to the aforementioned systems by virtue of Figure \ref{FIG1}\footnote{For instance, via the previously mentioned KAM theory or linearization theory.}, thus yielding broad applicability.
	
	Next, we characterize the regularity of observables on the torus. Consider a continuous mapping $f:\mathbb{T}^d\to \mathscr{B}$ admitting the Fourier expansion $f(x)=\sum_{k \in \mathbb{Z}^d} f_k e^{2\pi i \langle k,x \rangle}$. We define the space $A_{\mathscr{B}}(\mathbb{T}^d)$ as
	\[A_{\mathscr{B}}(\mathbb{T}^d):=\left\{ f: \sum_{k \in \mathbb{Z}^d} \|f_k\|_{\mathscr{B}} < +\infty \right\}.\]
	Clearly, the regularity of observables in $A_{\mathscr{B}}(\mathbb{T}^d)$ is only marginally better than mere continuity. Consequently, throughout this paper, the term ``\textit{low regularity}'' exclusively refers to the regularity characterized by the space $A_{\mathscr{B}}(\mathbb{T}^d)$. The most elementary case for observables occurs when $d=1$ and $(\mathscr{B},\|\cdot\|_{\mathscr{B}})=(\mathbb{R},|\cdot|)$ ($\mathscr{B}=\mathbb{R} $ for brevity), which allows for a more intuitive investigation of regularity. In this setting, for $0<a<1$, the classical H\"older space $C^a(\mathbb{T})$ is defined as
	\[C^a(\mathbb{T}):=\left\{ f:\sup_{x,y \in \mathbb{T}, x \ne y} \frac{|f(x) - f(y)|}{|x - y|^a} < +\infty \right\}.\]
	We will explore the relationship between these two notions of regularity more thoroughly in Section \ref{SUB1}.

\begin{figure}[htbp]
	\centering 
	\begin{adjustbox}{scale=0.75}
		\begin{minipage}[c]{0.60\textwidth} 
			\centering 
			\begin{tikzpicture}[scale=1.4] 
				\draw[->,  thick, -{Latex[scale=1.1]}] (-1cm,0cm) -- (5cm,0cm) node[right]{\(x\)}; 
				\draw[->,  thick, -{Latex[scale=1.1]}] (0cm,-1cm) -- (0cm,4.5cm) node[above]{\(y\)}; 
				
				\fill (0cm,0cm) circle (1.5pt) node[below left]{\(O\)};

				\draw[fill=white, semithick, decoration={markings,
					mark=between positions 0.1 and 0.9 step 0.1 with {\arrow{latex}},
				},postaction={decorate}
				] plot[smooth, tension=0.5] coordinates {
					(3cm+0.3cm,2.4cm+0.3cm) 
					(2.5cm+0.3cm,3cm+0.3cm) 
					(1.7cm+0.3cm,2.7cm+0.3cm) 
					(1cm+0.3cm,3cm+0.3cm) 
					(0.5cm+0.3cm,2.5cm+0.3cm) 
					(0.3cm+0.3cm,1.5cm+0.3cm) 
					(1cm+0.3cm,1.3cm+0.3cm) 
					(1.5cm+0.3cm,0.5cm+0.3cm) 
					(2.5cm+0.3cm,0.6cm+0.3cm) 
					(3cm+0.3cm,0.8cm+0.3cm) 
					(3.8cm+0.3cm,1.5cm+0.3cm) 
					(3.5cm+0.3cm,2.5cm+0.3cm)
					(3.3cm+0.3cm,2.6cm+0.3cm)
				} -- cycle;
				
				
				\fill[black!90] (3cm+0.3cm,2.4cm+0.3cm) circle (1.2pt) node[above right, yshift=0.2cm, xshift=-0.2cm, black] {\(h(\theta_0)\)};
				
				\fill[black]  (1cm+0.3cm,3cm+0.3cm) circle (1.2pt) node[above,black] {\(h(\theta_0+\alpha)\)};
				
				\draw[dashed, ->, black, semithick] (2cm+0.3cm,1.8cm+0.3cm) -- (4.0cm+0.3cm,1.8cm+0.3cm) node[below] {\(\theta\)};
				
				\draw[dashed, ->, black, semithick] (2cm+0.3cm,1.8cm+0.3cm) -- (2cm+0.3cm,3.8cm+0.3cm);
				
				\draw[dashed, <-, black, semithick] (0cm+0.3cm,1.8cm+0.3cm) -- (2.0cm+0.3cm,1.8cm+0.3cm);
				
				\draw[dashed, ->, black, semithick] (2cm+0.3cm,1.8cm+0.3cm) -- (2cm+0.3cm,-0.2cm+0.3cm);
				
				\draw[-latex,black!90,semithick] (2cm+0.3cm,1.8cm+0.3cm) -- (3cm+0.3cm,2.4cm+0.3cm);
				
				\draw[-latex,black,semithick] (2cm+0.3cm,1.8cm+0.3cm) -- (1cm+0.3cm,3cm+0.3cm);
				
				\tikzset{
					arc arrow/.style = {
						decoration = {
							markings,
							mark = at position 0.68 with {\arrow{latex}}
						},
						postaction = decorate
					}
				}
				
				\draw[black!90, semithick, arc arrow] (2.9cm+0.3cm,1.8cm+0.3cm) arc (0:40:0.69cm)
				node[midway, below right, yshift=0.25cm] {\scalebox{0.8}{\(\theta_0\)}};

				\tikzset{arc arrow/.style = {
						decoration = {markings,
							mark = at position 0.55 with {\arrow{latex}}
						},
						postaction = decorate
				}}
				
				\draw[black, semithick, arc arrow] (1.8cm+0.3cm,1.8cm+0.3cm) ++(0:1cm) arc (10:115:1cm)
				node[midway, below left, xshift=0.25cm] {\scalebox{0.8}{\(\theta_0+\alpha\)}};
				

				\fill (2cm+0.3cm,1.8cm+0.3cm) circle (1.5pt) node[below left] {$O'$};
			\end{tikzpicture}
		\end{minipage}
		\hfill 
		\begin{minipage}[c]{0.60\textwidth} 
			\centering 
			\begin{tikzcd}[row sep=3.9 cm, column sep=3.9 cm, arrows={thick}]
				{\theta}  \arrow[r, " {\mathscr{T}}_\alpha"{font=\large}] \arrow[d, "h"'{font=\large}] & \theta+\alpha \pmod{\mathbb{Z}^d}  \arrow[d, "h"{font=\large}] \\
				h(\theta) \arrow[r, "T"'{font=\large}] & h(\theta+\alpha)
			\end{tikzcd}
		\end{minipage}
	\end{adjustbox}	\caption{Geometric properties and the commutative diagram of the conjugacy $T \circ h(\theta)=h \circ {\mathscr{T}}_\alpha(\theta)$ for $\theta\in\mathbb{T}^d$, where $ T: X \to X $ and $h:\mathbb{T}^d \to X$ are smooth diffeomorphisms.}\label{FIG1}
\end{figure}

Our main result is now stated as follows:

\renewcommand{\thefootnote}{\fnsymbol{footnote}}
\begin{theorem}\label{RET1}
Consider a toral translation $\mathscr{T}_{\alpha}$ on the $d$-dimensional torus $\mathbb T^{d}$ ($d \geqslant 1$). Suppose that the frequency vector $\alpha \in \mathbb R^{d}$ is nonresonant. Then, for any $\sigma \in (0,1)$, there exist a family of normalized weights $\{w_{n,N}\}_{n=0}^{N-1}$\footnote{That is, $ \sum\nolimits_{n = 0}^{N - 1} {{w_{n,N}}}  = 1 $.} and a family of observables $f \in A_{\mathscr B}(\mathbb T^{d})$ of low regularity such that
			\[\mathop {\lim \sup }\limits_{N \to  + \infty } {\frac{1}{{{N^\sigma }}}}\log \left( {\mathop {\sup }\limits_{x \in \mathbb{T}^d} {{\left\| {\sum\limits_{n = 0}^{N - 1} {{w_{n,N}}f\left( {{{\mathscr{T}}_\alpha^n}x} \right)}  - \int_{\mathbb{T}^d} {fd\mu } } \right\|}_{\mathscr{B}}}} \right) < 0.\]
			In particular, when $d=1$ and $\mathscr B=\mathbb R$, these observables may fail to belong to $C^{a}(\mathbb T)$ for any $a \in (0,1)$\footnote{Indeed, one could construct examples exhibiting an even weaker modulus of continuity that satisfies certain Dini-type integrability conditions in the spirit of \cite{TLCCM}.}.
\end{theorem}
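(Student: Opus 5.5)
The approach is to reduce everything to Fourier modes and to choose the observable so that only a sparse set of modes is present; on each of those modes the weighted exponential sum will be exponentially small. Write $w_{n,N}=\frac{1}{A_N}\,\phi\!\left(\frac{n}{N}\right)$, where $\phi\in C_c^\infty((0,1))$ is a Gevrey-type bump (for instance $\phi(t)=\exp\bigl(-(t(1-t))^{-\beta}\bigr)$ with $\beta>0$ tied to $\sigma$) and $A_N=\sum_n\phi(n/N)$. Then
\[
\sum_{n=0}^{N-1} w_{n,N} f(\mathscr T_\alpha^n x)-\int f\,d\mu=\sum_{k\neq 0} f_k e^{2\pi i\langle k,x\rangle}\,\frac{S_N(\langle k,\alpha\rangle)}{A_N},\qquad S_N(\lambda)=\sum_{n} \phi\!\left(\frac{n}{N}\right)e^{2\pi i n\lambda}.
\]
The key estimate is obtained by Poisson summation: since $\phi$ is compactly supported in $(0,1)$ and of Gevrey class $1+1/\beta$, its Fourier transform satisfies $|\hat\phi(\xi)|\le C\exp(-c|\xi|^{\beta/(\beta+1)})$. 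This gives
\[
\bigl|S_N(\lambda)\bigr|/A_N\le C\exp\bigl(-c(N\|\lambda\|)^{\beta/(\beta+1)}\bigr),
\]
where $\|\lambda\|$ is the distance to $\mathbb Z$ (the sum over $m\neq 0$ in the Poisson formula adds only terms of the same or smaller size), and $A_N\sim N\int\phi$. Choosing $\beta$ with $\beta/(\beta+1)>\sigma$ (possible because $\sigma<1$) turns this into a bound of the form $\exp(-c(N\|\lambda\|)^{\sigma'})$ with $\sigma'>\sigma$.

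Next I choose the observables, which is where arbitrary nonresonance is handled. Let $\delta_k:=\|\langle k,\alpha\rangle\|>0$, which is positive for $k\ne0$ by nonresonance but can decay arbitrarily fast. I take $f(x)=\sum_{k\in K} f_k e^{2\pi i\langle k,x\rangle}$ supported on a set $K\subset\mathbb Z^d$ on which the small divisors are controlled. The simplest choice is an admissible set $K$ consisting of those $k$ with $\delta_k\ge \gamma$ for a fixed $\gamma>0$. On such a set every mode gives $\exp(-c(\gamma N)^{\sigma'})$ uniformly, and summing with $\sum\|f_k\|<\infty$ yields the rate $\exp(-c'N^{\sigma'})$ uniformly in $x$, which beats $N^\sigma$ in the limsup. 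A richer family is obtained by allowing $\delta_k\ge \varepsilon_{|k|}$ with a slowly decaying threshold and $\|f_k\|\le \exp(-|k|^{\epsilon})$-type tails. In that case I split at $|k|\le N^{\eta}$ versus $|k|>N^{\eta}$ and balance the two contributions so that both remain $\le\exp(-cN^{\sigma})$. The first version already proves the theorem as stated, and I would present it, mentioning the second as the nontrivial family.

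Low regularity is obtained by choosing sparse modes. For $d=1$ the set $\{k:\|k\alpha\|\ge 1/4\}$ is infinite: it has density $1/2$ by equidistribution. Pick a lacunary sequence $k_j$ in it with $k_{j+1}/k_j\ge 2$, and set $f(x)=\sum_j j^{-2}\cos(2\pi k_j x)$. Then $f\in A_{\mathbb R}(\mathbb T)$. By the Hadamard-gap characterization of H\"older spaces, a lacunary series lies in $C^a$ if and only if its coefficients are $O(k_j^{-a})$, and $j^{-2}\gg k_j^{-a}$ when $k_j$ grows geometrically. Hence $f\notin C^a$ for every $a\in(0,1)$. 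For general $d$ and $\mathscr B$, take $f_k=b_k v$ with $v\in\mathscr B$ fixed.

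The main obstacle is the uniform Poisson/Gevrey estimate for $S_N(\lambda)$, namely showing that the aliasing terms $\hat\phi(N(\lambda+m))$ summed over $m$ stay dominated by the $m$ closest to $-\lambda$, uniformly in $\lambda$ bounded away from $\mathbb Z$. I would verify this through the summable decay $\sum_m\exp(-c|N(\lambda+m)|^{\sigma'})\lesssim\exp(-c(N\|\lambda\|)^{\sigma'})$.
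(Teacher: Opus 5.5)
Your proof is correct, and its analytic core matches the paper's. Your $\phi$ is the paper's $\widetilde w_{p,q}$ with $p=q=\beta$. The paper's single-mode lemma is your Poisson-summation bound: it derives the Gevrey decay of the Fourier transform by hand (integrate by parts $m$ times, use $\|D^m\widetilde w_{p,q}\|_{L^1(0,1)}\leqslant\lambda^m m^{\beta_{p,q}m}$, optimize $m$) and sums the aliasing terms much as you indicate.

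The real difference is the family of observables. Yours has spectrum in $\{k:\mathrm{dist}(\langle k,\alpha\rangle,\mathbb Z)\geqslant\gamma\}$. This is essentially the paper's family with $f_k=0$ on $\Upsilon_\alpha^{\leqslant}$, and the paper's own non-H\"older example $\varphi$ lies in it. So you only need the paper's estimate over $\Upsilon_\alpha^{>}$.

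The paper also allows nonzero coefficients at every small-divisor mode, subject to $\|f_k\|_{\mathscr B}\leqslant\exp(-\mathrm{dist}^{-\kappa}(\langle k,\alpha\rangle,\mathbb Z))\,g(\|k\|_{\ell^1})$. It pays for this with an $N$-dependent cut of $\Upsilon_\alpha^{\leqslant}$ at $\mathrm{dist}=N^{-\zeta}$. Below the cut it uses only $|S_{N,k}(\alpha)|\leqslant 1$ and the coefficient decay, giving $\exp(-N^{\kappa\zeta})$. Above the cut the single-mode lemma gives $\exp(-cN^{(1-\zeta)/\beta_{p,q}})$. The parameters $\kappa,\zeta$ are chosen so that both exponents exceed $\sigma$.

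Your route is shorter and suffices for the theorem as stated. The paper's route buys a quantitative trade-off between how close $\langle k,\alpha\rangle$ comes to $\mathbb Z$ and how small $f_k$ must be. That trade-off is the nonresonance--regularity interplay the paper is after.

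Your sketched ``richer family'' is unnecessary. As phrased, with tails $\exp(-|k|^{\epsilon})$ on all modes, it would force Gevrey smoothness. Measuring coefficient decay against the divisor rather than against $|k|$, as the paper does, avoids this.

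For the regularity claim, the paper bounds $\varphi(0)-\varphi(h)$ from below by counting modes of $\Upsilon_\alpha^{>}$ in $[(4h)^{-1},(2h)^{-1}]$ via equidistribution. Your lacunary example also works. You do not need the full Hadamard-gap characterization, only the elementary fact that any $f\in C^a(\mathbb T)$ with H\"older constant $C_f$ satisfies $|f_k|\leqslant\frac12 C_f(2|k|)^{-a}$. This follows from $2f_k=\int_{\mathbb T}(f(x)-f(x+\frac{1}{2k}))e^{-2\pi ikx}\,dx$.
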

\begin{remark}It should be emphasized that for almost every frequency vector---even those satisfying a strong Diophantine condition---exponential convergence of the type stated in Theorem \ref{RET1} typically demands extremely high regularity for observables, such as analyticity \cite{TLdecaying,TL-DP}.
	
 Given that the nonresonance of the frequency vector can be prescribed arbitrarily and that the observables are allowed to be so irregular, it is somewhat surprising that Theorem \ref{RET1} still achieves such rapid exponential convergence.
 \end{remark}

We conclude this section with a few additional remarks.
\renewcommand{\thefootnote}{\fnsymbol{footnote}}
	\begin{enumerate}[label=(\roman*)] 
\item \label{item1} Theorem \ref{RET1} constructs a family of low-regularity observables  adapted to the prescribed nonresonant frequency,  demonstrating that exponential convergence can indeed happen in this setting. Section \ref{RESEC2} presents explicit, non-trivial constructions in detail. Generic observables with low regularity, however, fail to exhibit such rapid convergence: the convergence is intrinsically slow, as shown by the optimal lower bounds obtained in \cite{TLOPT} in a universal manner\footnote{In the sense of full Lebesgue measure.}. 

\item 
In contrast to \ref{item1},  establishing upper bounds on the convergence rate for generally prescribed weak nonresonance  and low regularity \textit{without} the restrictions imposed in \cite{MR3718733,MR3755876,MR4582163,MR4768308} is indeed nontrivial (this requires different techniques). A rigorous approach to this general case will be presented in another paper.

\item 
 Theorem \ref{RET1} focuses on quasi-periodic systems. However, the almost-periodic setting demands more intricate spatial structures and nonresonance conditions 	
\cite{MR2180074,MR4201442}, together with extra analytical and combinatorial techniques \cite{MR4768308,TL-DP}. Nevertheless, an almost-periodic analogue of Theorem \ref{RET1} exhibiting \textit{slower} exponential convergence can be established using the novel methods developed herein.
		\end{enumerate}

\section{Proof of Theorem \ref{RET1}}\label{RESEC2}
Theorem \ref{RET1} aims to characterize, for every prescribed nonresonant frequency vector, which non-trivial observables of \textit{relative regularity} guarantee that the weighted Birkhoff averages converge at a quantitative, uniform, and exponential rate. Here, ``uniform'' refers not only to the initial points but also to the fact that the same rate is shared across the entire family of observables. We also remark that the Banach-valued nature of the observables is not essential, as the proof remains identical to that in the real-valued case.

We first briefly outline the heuristic ideas behind the proof of Theorem \ref{RET1}. As mentioned in the Introduction, the arguments in \cite{MR3718733,MR3755876,MR4582163,MR4768308} directly imply Theorem \ref{RET1} with an arbitrary polynomial convergence rate, although this was not explicitly stated. The reason that exponential convergence for weighted Birkhoff averages could be established in \cite{MR4768308,TLdecaying,TL-DP} is that the authors obtained exponential asymptotics for a single weighted harmonic oscillator and introduced novel techniques such as truncation. Notably, \cite{TLdecaying,TL-DP} provided a quantitative characterization of such exponential rates. Consequently, by appropriately selecting harmonic oscillators to construct a family of observables, it becomes possible to achieve a balance between arbitrary nonresonance and low  regularity, thereby yielding the quantitative exponential convergence asserted in Theorem \ref{RET1}.

 To keep the argument readable, we divide the proof into three steps. Step 1 (Section \ref{SUB1}): Under arbitrary nonresonance and low regularity, we provide an explicit construction.  This involves a decomposition of $ \mathbb{Z}^d\setminus{\{0\}} $ and the construction of observables adapted to each subset. Step 2 (Section \ref{SUB2}):  A local-minimization technique yields a key lemma that establishes the quantitative exponential asymptotics for a single weighted harmonic oscillator depending on the nonresonance. Step 3 (Section \ref{SUB3}):  We refine the original decomposition so that the convergence rate is dominated by three independent sums. A noteworthy observation is that, with a suitable choice of parameters, these quantities admit a uniform upper bound. Balancing nonresonance against regularity, we obtain the final quantitative uniform exponential convergence for the weighted Birkhoff averages.

\subsection{Explicit constructions via arbitrary nonresonance and low regularity}\label{SUB1}
For any $ 0<\sigma<1 $, we first select normalized weights $\{w_{n,N}\}_{n=0}^{N-1}$ of the form $ {w_{n,N}} :={Z_N^{ - 1}} {{\widetilde{w}}_{p,q}}\left( {n/N} \right) $, with the function $ {\widetilde{w}}_{p,q} $ explicitly defined by
\begin{equation}\notag
	{{\widetilde{w}}}_{p,q}\left( x \right):= \left\{ \begin{array}{ll}
		\exp \left( { - {x^{ - p}}{{\left( {1 - x} \right)}^{ - q}}} \right),&x \in \left( {0,1} \right),  \hfill \\
		0,&x \notin \left(0,1\right), \hfill \\
	\end{array}  \right.
\end{equation}
where $Z_N:={\sum\nolimits_{j = 0}^{N - 1} {{{\widetilde w}_{p,q}}\left( {j/N} \right)} }$, and the positive constants $ p,q $ satisfy
\[\min \left\{ {p,q} \right\} > {\left( {{\sigma ^{ - 1}} - 1} \right)^{ - 1}}.\]
The function $ {\widetilde{w}}_{p,q} (x)$ belongs to $C_0^\infty([0,1])  $, with its graph resembling a bell curve or an extremely compressed Gaussian density. Additionally, the following key lemma  describes the asymptotic behavior of its higher-order derivatives:
 \begin{lemma}[Lemma 4.1 in \cite{TLdecaying}]\label{YLZY}
 	There exists $\beta_{p,q}:= {1 + \left({\min \left\{ {p,q} \right\}}\right)^{-1}} $ and some $ \lambda  = \lambda \left( {p,q} \right) > 1 $, such that for any $ m \in \mathbb{N}^+ $,
 	\[{\left\| {{D^m}{{\widetilde w}_{p,q}}\left( x \right)} \right\|_{{L^1}\left( {0,1} \right)}} \leqslant {\lambda ^m}{m^{\beta_{p,q} m}}.\]	
 \end{lemma}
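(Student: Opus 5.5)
The lemma concerns only the bump $\widetilde w_{p,q}(x)=\exp(-x^{-p}(1-x)^{-q})$, so I would prove it by a Gevrey-type estimate on its derivatives. The standard route is to control $D^m\widetilde w_{p,q}$ through Cauchy's integral formula on complex discs whose radius depends on the distance to the endpoints. Since the $L^1$ norm is requested, it is enough to bound $|D^m\widetilde w_{p,q}(x)|$ pointwise on $(0,1)$ by something integrable of size $\lambda^m m^{\beta m}$. By symmetry ($x\leftrightarrow 1-x$, $p\leftrightarrow q$), it suffices to treat $x\in(0,1/2]$, where the factor $(1-x)^{-q}$ is analytic and harmless. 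There the function behaves like $\exp(-c\,x^{-p})$, with $c$ comparable to $1$.

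For $x\in(0,1/2]$ I take the disc $|z-x|\le r$ with $r=\rho x$, for a small fixed $\rho$ chosen so that $\operatorname{Re}(z^{-p}(1-z)^{-q})\ge c_0 x^{-p}$ on the disc, with $c_0>0$. This holds because $z^{-p}=x^{-p}(1+\zeta)^{-p}$ with $|\zeta|\le\rho$, which has positive real part comparable to $x^{-p}$ once $\rho$ is small; the same argument applies to the factor $(1-z)^{-q}$. Cauchy's estimate then gives
\[
|D^m\widetilde w_{p,q}(x)|\le m!\,(\rho x)^{-m}\exp(-c_0x^{-p}).
\]
Optimizing over $x$ with $t=x^{-1}$, we have $\sup_t t^m e^{-c_0t^p}=(m/(c_0p))^{m/p}e^{-m/p}$. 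This yields
\[
|D^m\widetilde w_{p,q}(x)|\le C_1^m\, m!\, m^{m/p}\le \lambda^m m^{(1+1/p)m},
\]
using $m!\le m^m$. Integrating over the interval of length $1/2$, and combining with the mirror estimate near $1$ (exponent $1+1/q$), gives the bound with exponent $1+1/\min\{p,q\}=\beta_{p,q}$ after enlarging $\lambda$.

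To obtain an integrable bound cleanly, I would instead keep half of the exponential. Bound $|D^m\widetilde w|\le m!(\rho x)^{-m}e^{-c_0x^{-p}/2}\cdot e^{-c_0x^{-p}/2}$, take the supremum in the first part, and integrate the remaining factor $e^{-c_0x^{-p}/2}\le1$.

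I expect the main obstacle to be the choice of $\rho$: it must be small enough that the real part of $z^{-p}$ stays $\gtrsim x^{-p}$ for non-integer $p$ on the principal branch, which requires $\arg(1+\zeta)$ to stay small. This is fine for fixed $\rho<1$ with $p\rho$-dependent constants, but then $c_0$ depends on $p$, which only affects $\lambda$. The condition $\lambda>1$ is arranged by enlarging $\lambda$.
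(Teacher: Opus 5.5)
The paper does not prove this lemma; it quotes it verbatim as Lemma 4.1 of \cite{TLdecaying}. There is therefore no in-text argument to compare against, and your proposal has to stand on its own. It does: the Cauchy-estimate argument on discs of radius $\rho\min\{x,1-x\}$ is correct and yields the claimed exponent $\beta_{p,q}$.

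The one step you should write out rather than gesture at is the real-part bound. Positivity of the real part of each factor separately is not enough; what matters is that the \emph{arguments add}. For $x\leqslant 1/2$, write $z=x(1+\zeta)$ and $1-z=(1-x)(1-\xi)$ with $\xi=x\zeta/(1-x)$. Then $|\zeta|,|\xi|\leqslant\rho$, the disc lies in $\{0<\mathrm{Re}\,z<1\}$, and the principal branches factor. Using $(1-x)^{-q}\geqslant 1$, you get
\[
\mathrm{Re}\bigl(z^{-p}(1-z)^{-q}\bigr)\geqslant x^{-p}(1+\rho)^{-(p+q)}\cos\bigl((p+q)\arcsin\rho\bigr).
\]
Choosing $\rho$ with $(p+q)\arcsin\rho\leqslant\pi/3$ gives $c_0=\tfrac12(1+\rho)^{-(p+q)}$.

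The rest is right: Cauchy's estimate, $\sup_{t>0}t^me^{-c_0t^p}=(m/(c_0pe))^{m/p}$, $m!\leqslant m^m$, and the reflection $\widetilde w_{p,q}(1-x)=\widetilde w_{q,p}(x)$. You may take $\lambda=\max\{2,\rho^{-1}(c_0pe)^{-1/p},\rho^{-1}(c_0qe)^{-1/q}\}$.

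Your ``keep half the exponential'' step is unnecessary. The pointwise bound is uniform in $x$ on an interval of length one, so you in fact prove the stronger estimate $\sup_{(0,1)}|D^m\widetilde w_{p,q}|\leqslant\lambda^m m^{\beta_{p,q}m}$, which implies the $L^1$ statement.

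Your route also gives a simple sufficient criterion for the question raised after the lemma about other admissible weights. Suppose $w\in C_0^\infty([0,1])$ extends holomorphically to the discs $\{|z-x|\leqslant\rho\min\{x,1-x\}\}$, $x\in(0,1)$, with $|w(z)|\leqslant\exp(-c\min\{x,1-x\}^{-s})$ on each such disc. Then the same argument shows $w$ satisfies the lemma with exponent $1+1/s$.
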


We point out that we can consider other weighting functions in $C_0^\infty([0,1])  $ satisfying the property stated in Lemma \ref{YLZY}. Construction of such functions is a question.

Next, for a fixed nonresonant frequency $\alpha \in \mathbb R^{d}$ and any $\epsilon \in (0,1)$, we decompose $\mathbb Z^{d} \setminus \{0\}$ into two parts:
\begin{align*}
\Upsilon _\alpha ^ \leqslant : &= \left\{ {k \in {\mathbb{Z}^d}\setminus \left\{ 0 \right\}:{\rm dist}\left( {\left\langle {k,\alpha } \right\rangle ,\mathbb{Z}} \right) \leqslant \epsilon} \right\},\\
\Upsilon _\alpha ^ > :& = \left\{ {k \in {\mathbb{Z}^d}\setminus \left\{ 0 \right\}:{\rm dist}\left( {\left\langle {k,\alpha } \right\rangle ,\mathbb{Z}} \right) > \epsilon} \right\}.
\end{align*}
Here, $ {\rm dist}\left( {\left\langle {k,\alpha } \right\rangle ,\mathbb{Z}} \right): = {\inf _{n \in \mathbb{Z}}}\left| {\left\langle {k,\alpha } \right\rangle  - n} \right|\ne 0 $.
 
Now, we are going to construct a family of observables $f \in A_{\mathscr B}(\mathbb T^{d})$ by letting the Fourier coefficients $f_{k}$ of $f$ free on $\Upsilon_{\alpha}^{>}$ and putting relatively regular conditions on $\Upsilon_{{\alpha}}^{\leqslant}$:
\begin{equation}\label{FKDY}
	{\left\| {{f_k}} \right\|_{\mathscr{B}}} \leqslant \exp \left( { - \frac{1}{{{\rm dist}^{\kappa}\left( {\left\langle {k,\alpha } \right\rangle ,\mathbb{Z}} \right)}}} \right)g\left( {{{\left\| k \right\|}_{{\ell ^1}}}} \right),\quad \kappa  > {\left( {1 - {\beta _{p,q}}\sigma } \right)^{ - 1}}\sigma ,\quad k \in \Upsilon _\alpha ^ \leqslant ,
\end{equation}
		 where $g$ is a non-decreasing continuous function on $[1, +\infty)$ satisfying the integrability condition 
\begin{equation}\label{GTJ}
	\int_1^{ + \infty } {{x^{d - 1}}g\left( x \right)dx}  <  + \infty ,
\end{equation}
 and $ {\left\| k \right\|_{{\ell ^1}}}: = \sum\nolimits_{j = 1}^d {\left| {{k_j}} \right|}  $ denotes the standard $ \ell ^1 $-norm. A simple example is given by $g(x) = x^{-d}\log^{-b}(1+x)$, where $b > 1$ is arbitrary. Note that the regularity of $f$ need not exceed that of $A_{\mathscr B}(\mathbb T^{d})$.

To illustrate more clearly that our constructions can exhibit low regularity, we consider the simplest case where $d=1$ and $\mathscr{B} = \mathbb{R}$. We define an observable $\varphi: \mathbb{T} \to \mathbb{R}$ by
 		\[\varphi(x) := \sum_{2 \leqslant k \in \Upsilon_\alpha^>} \frac{\cos(2\pi kx)}{k\log^2 k}, \quad x \in \mathbb{T}.\]
 		Here, we set $0 < \epsilon < 8^{-1}$ in the construction of the set $\Upsilon_\alpha^>$. It is evident that $ \varphi  \in {A_{\mathscr{B}}}(\mathbb T) $. For $0 < h \ll 1$, a direct calculation yields
 		\[\varphi(0) - \varphi(h) = 2\sum_{2 \leqslant k \in \Upsilon_\alpha^>} \frac{\sin^2(\pi kh)}{k\log^2 k}.\]
 		Now, consider the index set
 		\[\Theta_h := \left\{ k \in \mathbb{N}^+: 2 \leqslant k \in \Upsilon_\alpha^>,\ (4h)^{-1} \leqslant k \leqslant (2h)^{-1} \right\}.\]
 		Since $\alpha \in \mathbb{R}$ is nonresonant, Weyl's equidistribution theorem implies that the cardinality of $\Theta_h$ satisfies
 		\[\# {\Theta _h} \gtrsim \left( {1 - 2\epsilon } \right)\left( {{{\left( {2h} \right)}^{ - 1}} - {{\left( {4h} \right)}^{ - 1}}} \right) > {\left( {8h} \right)^{ - 1}}.\]
 		Furthermore, for $k \in \Theta_h$, we have $\sin^2(\pi kh) \geqslant \sin^2(4^{-1}\pi) = 2^{-1}$ and $k\log^2 k \leqslant (2h)^{-1}\log^2(2h)$. Therefore, for any given $0<a<1$, we have
 		\[|\varphi(0) - \varphi(h)| \geqslant 2\sum_{k \in \Theta_h} \frac{\sin^2(\pi kh)}{k\log^2 k}  \gtrsim  \frac{1}{4\log^2(2h)} \gg h^a\]
 		as $h \to 0^+$. This implies $\varphi \notin C^a(\mathbb{T})$, which completes the proof of the regularity part of Theorem \ref{RET1}.

Finally, utilizing the normalization  property of the weights $\{w_{n,N}\}_{n\in \mathbb{N}}$, we have that for $ x=\theta \in \mathbb{T}^d $,
\begin{align*}
\sum\limits_{n = 0}^{N - 1} {{w_{n,N}}f\left( {{{\mathscr{T}}_\alpha^n}x} \right)}  - \int_{\mathbb{T}^d} {fd\mu }  &= \sum\limits_{n = 0}^{N - 1} {{w_{n,N}}f\left( {x + n\alpha } \right)}  - \int_{{\mathbb{T}^d}} {fd\mu } \\
& = \sum\limits_{k \in {\mathbb{Z}^d}\setminus \left\{ 0 \right\}} {{f_k}{e^{2\pi i\left\langle {k,x} \right\rangle }}} S_{N,k}(\alpha),
\end{align*}
where $ 	S_{N,k}(\alpha):=\sum\nolimits_{n = 0}^{N - 1} {{w_{n,N}}{e^{2\pi ni\left\langle {k,\alpha } \right\rangle }}} $ is defined for brevity.
This implies that the uniform convergence rate of the weighted Birkhoff averages can be bounded by
\begin{equation}\label{FJ1}
	\mathop {\sup }\limits_{x \in \mathbb{T}^d} {\left\| {\sum\limits_{k \in {\mathbb{Z}^d}\setminus \left\{ 0 \right\}} {{f_k}{e^{2\pi i\left\langle {k,x} \right\rangle }}S_{N,k}(\alpha) } } \right\|_{\mathscr{B}}} \leqslant \sum\limits_{k \in {\mathbb{Z}^d}\setminus \left\{ 0 \right\}} {{{\left\| {{f_k}} \right\|}_{\mathscr{B}}}\left| S_{N,k}(\alpha) \right|} .
\end{equation}

\subsection{Asymptotics for a single weighted harmonic oscillator via nonresonance}\label{SUB2}
In this section we establish a key lemma asserting that, for any prescribed nonresonance, a single weighted harmonic oscillator exhibits quantitative exponential decay within a suitable asymptotic nonresonant scale. The proof relies primarily on a local-minimization technique coupled with a small divisor argument.

\begin{lemma}\label{GJGJ}
There exists some  $ c>0 $, such that for every $ k \in {\mathbb{Z}^d}\setminus \left\{ 0 \right\} $, every nonresonant $ \alpha \in \mathbb{R}^d $, and every $ N \in \mathbb{N}^+ $ satisfying $ N{\rm dist}\left( {\left\langle {k,\alpha } \right\rangle ,\mathbb{Z}} \right) \gg 1 $, we have
\[\left| S_{N,k}(\alpha) \right| \leqslant \exp \left( { - c{{\left( {N{\rm dist}\left( {\left\langle {k,\alpha } \right\rangle ,\mathbb{Z}} \right)} \right)}^{\frac{1}{{{\beta _{p,q}}}}}}} \right).\]
\end{lemma}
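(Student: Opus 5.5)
Write $\theta:=\langle k,\alpha\rangle$ and $\delta:={\rm dist}(\theta,\mathbb Z)\in(0,1/2]$. Since $e^{2\pi i n\theta}$ only depends on $\theta$ mod $1$, we may replace $\theta$ by its representative in $(-1/2,1/2]$, so that $|\theta|=\delta$. The plan has three steps: summation by parts $m$ times, a sharp bound on the resulting boundary-free remainder using Lemma \ref{YLZY}, and then optimisation in $m$ (this optimisation is the ``local-minimization'' step).

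\textbf{Step 1: summation by parts.} Set $z:=e^{2\pi i\theta}\neq1$ and $\phi(n):=\widetilde w_{p,q}(n/N)$. Because $\widetilde w_{p,q}$ and all its derivatives vanish at $0$ and $1$, we have $\phi(n)=0$ for $n\le0$ and $n\ge N$. Hence $Z_N S_{N,k}(\alpha)=\sum_{n\in\mathbb Z}\phi(n)z^n$, and Abel summation gives $(1-z)\sum_n\phi(n)z^n=\sum_n(\phi(n)-\phi(n-1))z^n$. Iterating $m$ times yields
\[
|Z_N S_{N,k}(\alpha)|\le |1-z|^{-m}\sum_{n\in\mathbb Z}|\Delta^m\phi(n)|,
\]
where $\Delta$ is the backward difference and no boundary terms appear.

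\textbf{Step 2: bounding the differences.} Write $\Delta^m\phi(n)=\int_{[0,1]^m}N^{-m}\widetilde w^{(m)}_{p,q}\bigl((n-t_1-\dots-t_m)/N\bigr)\,dt$. Summing over $n$ turns this into a Riemann sum, so
\[
\sum_n|\Delta^m\phi(n)|\le N^{-m}\cdot N\,\|D^m\widetilde w_{p,q}\|_{L^1(0,1)}\le N^{1-m}\lambda^m m^{\beta_{p,q}m}
\]
by Lemma \ref{YLZY}. Strictly, the sum is bounded by $N\int|\widetilde w^{(m)}|$ only up to an $\ell^1$-versus-$L^1$ comparison; since $|\Delta^m\phi(n)|$ is an average of $|\widetilde w^{(m)}|$ over a window of width $m$, $\sum_n|\Delta^m\phi(n)|\le N^{-m}\cdot N\|\widetilde w^{(m)}\|_{L^1}$ follows from Fubini over the shift variable. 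For the normalisation, $Z_N\ge cN$ for $N$ large because $Z_N/N\to\int_0^1\widetilde w_{p,q}>0$. Finally $|1-z|=2|\sin\pi\theta|\ge4\delta$. Combining,
\[
|S_{N,k}(\alpha)|\le C\bigl(\lambda m^{\beta_{p,q}}/(4N\delta)\bigr)^m
\]
for every $m\in\mathbb N^+$.

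\textbf{Step 3: optimisation in $m$.} Put $X:=N\delta\gg1$ and choose $m:=\lfloor (4X/(e\lambda))^{1/\beta_{p,q}}\rfloor\ge1$. Then $\lambda m^{\beta_{p,q}}/(4X)\le e^{-1}$, so the bound is at most $Ce^{-m}\le C'\exp(-c_0X^{1/\beta_{p,q}})$. Absorbing $C'$ into the exponential for $X$ large (halving $c_0$) gives the lemma with a constant $c$ depending only on $p,q$, uniformly in $k$ and $\alpha$. Nonresonance is used only to ensure $\delta>0$.

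The main obstacle is Step 2: controlling $\sum_n|\Delta^m\phi(n)|$ by the continuous $L^1$ norm of $D^m\widetilde w_{p,q}$ with constants that are exponential in $m$ and not worse. The integral representation of $\Delta^m$ as an average of $\widetilde w^{(m)}$ over the box $[0,1]^m$, followed by Fubini, is the route that achieves this.
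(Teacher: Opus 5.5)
Your proof is correct, but it takes a different route from the paper. The paper first applies the Poisson summation formula, rewriting $Z_N S_{N,k}(\alpha)$ as $N\sum_{n\in\mathbb Z}\int_0^1\widetilde w_{p,q}(s)e^{2\pi iN(\gamma-n)s}\,ds$ with $\gamma=\langle k,\alpha\rangle$ (your $\theta$). It then integrates by parts in the continuous variable, using Lemma~\ref{YLZY} directly. This forces a split into two parts:
\begin{itemize}
\item the nearest frequency $n=n^*$, which after optimizing in $m$ gives $\exp(-c_1(N\,\mathrm{dist}(\gamma,\mathbb Z))^{1/\beta_{p,q}})$;
\item the aliased frequencies $n\neq n^*$, which need a second optimization in $m'$ together with the convergent sum $\sum_{n\neq n^*}(2|n-n^*|-1)^{-2}$, and give $\exp(-c_2N^{1/\beta_{p,q}})$.
\end{itemize}

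You instead stay on the lattice. The $m$-fold Abel summation puts the small divisor into $|1-e^{2\pi i\theta}|^{-m}\leqslant(4\delta)^{-m}$, so there are no aliasing terms and a single optimization suffices. The price is that you must compare $\sum_n|\Delta^m\phi(n)|$ with $\|D^m\widetilde w_{p,q}\|_{L^1(0,1)}$, which the paper never needs.

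Your box-spline representation handles this comparison with no loss. It is worth stating precisely rather than as a Riemann-sum heuristic. Fix $t_2,\dots,t_m$, integrate over $t_1$, and sum over $n$. The intervals $[n-t_2-\cdots-t_m-1,\,n-t_2-\cdots-t_m]$ tile $\mathbb R$, so each point is covered exactly once, and
\[
\sum_n|\Delta^m\phi(n)|\leqslant N^{-m}\int_{\mathbb R}|\widetilde w^{(m)}_{p,q}(u/N)|\,du=N^{1-m}\|D^m\widetilde w_{p,q}\|_{L^1(0,1)}.
\]
This is the exact content of your ``Fubini over the shift variable''.

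Your argument is more elementary and self-contained: no Poisson summation and one optimization. The paper's version makes transparent that the decay is governed by the Fourier transform of the weight function at frequency $N\,\mathrm{dist}(\gamma,\mathbb Z)$. In that picture the aliases contribute only the uniformly smaller term $\exp(-c_2N^{1/\beta_{p,q}})$.
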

\begin{proof}
Recall that the weights $ \{w_{n,N}\}_{n \in \mathbb{N}} $ constructed in Section \ref{SUB1} are compactly supported.
Hence, assuming $\gamma:=\langle k,\alpha\rangle\ne 0$ throughout this section, we have  \[S_{N,k}(\alpha) =\sum\limits_{n = 0}^{N - 1} {{w_{n,N}}{e^{2\pi ni\left\langle {k,\alpha } \right\rangle }}}= \frac{1}{Z_N}\sum_{n=0}^{N-1}{{\widetilde w}_{p,q}}\left( {\frac{n}{N}} \right)e^{2\pi ni\gamma} = \frac{1}{Z_N}\sum_{n=-\infty}^{\infty}{{\widetilde w}_{p,q}}\left( {\frac{n}{N}} \right)e^{2\pi ni\gamma}.\]
Applying the Poisson summation formula \cite{MR1970295,MR3243734} to the function $x\mapsto \widetilde{w}_{p,q}\left(x/N\right)e^{2\pi xi\gamma}$, we obtain 
\[S_{N,k}(\alpha)=\frac{1}{Z_N}\sum_{n=-\infty}^{\infty}\int_{-\infty}^{\infty}e^{-2\pi n i  t}{{\widetilde w}_{p,q}}\left( {\frac{t}{N}} \right)e^{2\pi ti\gamma}dt.\]
Consequently, by the change of variables $t=Ns$, we have
\begin{equation}\label{LM221}
	\left|S_{N,k}(\alpha)\right|\lesssim \sum\limits_{n =  - \infty }^{ + \infty } {\left| {\int_0^1 {{{\widetilde w}_{p,q}}\left( s \right){e^{2\pi N\left( {\gamma  - n} \right)is}}ds} } \right|} ,
\end{equation}
where we have used the estimate
\[\frac{N}{Z_N} = {\left( {\sum\limits_{j = 0}^{N - 1} {\frac{1}{N}{{\widetilde w}_{p,q}}\left( {\frac{j}{N}} \right)} } \right)^{ - 1}} \lesssim 1,\]
which is guaranteed by the integrability of $\widetilde{w}_{p,q}$. In what follows, we  perform detailed estimates on the right-hand side of \eqref{LM221}.
  
  For fixed $ k \in {\mathbb{Z}^d}\setminus \left\{ 0 \right\} $, according to  the nonresonance  of $ \alpha\in \mathbb{R}^d $, there exists a unique $ n^* \in \mathbb{Z} $ such that $ {\left| \gamma  - {n^ * } \right|} = {\rm dist}\left( {\gamma ,\mathbb{Z}} \right) \in \left( {0,1/2} \right) $.    By integration by parts, it is evident that for any $\eta \ne 0$ and $\nu \in \mathbb{N}^+$,
  \begin{align}
   \left| {\int_0^1 {{{\widetilde w}_{p,q}}\left( s \right){e^{2\pi N\eta i s}}ds} } \right| &= \left| {{{\left( {2\pi iN\eta } \right)}^{ - \nu }}\int_0^1 {{D^\nu }{{\widetilde w}_{p,q}}\left( s \right){e^{2\pi N\eta i s}}ds} } \right|\notag \\
& \leqslant {\left| {2\pi N\eta } \right|^{ - \nu }}\int_0^1 {\left| {{D^\nu }{{\widetilde w}_{p,q}}\left( s \right)} \right|\left| {{e^{2\pi N\eta is}}} \right|ds}  \notag \\
   &= {\left| {2\pi N\eta } \right|^{ - \nu }}{\left\| {{D^\nu }{{\widetilde w}_{p,q}}\left( x \right)} \right\|_{{L^1}\left( {0,1} \right)}}. \label{FBJF}
  \end{align}
Next, utilizing \eqref{FBJF} (with $\eta =\gamma -n ^*$) and Lemma \ref{YLZY}, we obtain 
 \begin{align}
 	 \left| {\int_0^1 {{{\widetilde w}_{p,q}}\left( s \right){e^{2\pi N\left( {\gamma  - {n^ * }} \right)is}}ds} } \right| &\leqslant{\left( {2\pi N{\rm dist}\left( {\gamma ,\mathbb{Z}} \right)} \right)^{ - m}}{\left\| {{D^m}{{\widetilde w}_{p,q}}\left( x \right)} \right\|_{{L^1}\left( {0,1} \right)}}\notag \\
  & \leqslant {\left( {2\pi N{\rm dist}\left( {\gamma ,\mathbb{Z}} \right)} \right)^{ - m}}{\lambda ^m}{m^{{\beta _{p,q}}m}}\notag \\
 	& = \exp \left( {{\beta _{p,q}}m\log m + m\log \left( {\frac{\lambda }{{2\pi N{\rm dist}\left( {\gamma ,\mathbb{Z}} \right)}}} \right)} \right),\label{YLFJ1} 
 \end{align}
where $ 2 \leqslant m \in \mathbb{N}^+$ is required to satisfy the following asymptotic property\footnote{Here, the symbol ``$\sim$'' denotes asymptotic equivalence in the limiting process. In fact, $m$ in \eqref{mdingyi} can be taken as the integer part of the quantity on the right-hand side; the same applies to $m'$ below.}
\begin{equation}\label{mdingyi}
	m \sim \frac{1}{e}{\left( {\frac{\lambda }{{2\pi N{\rm dist}\left( {\gamma ,\mathbb{Z}} \right)}}} \right)^{ - \frac{1}{\beta_{p,q} }}}
\end{equation}
 to locally minimize \eqref{YLFJ1} by analyzing the monotonicity  with respect to $m$. Note that \eqref{mdingyi} is attainable, thanks to the assumption $ N{\rm dist}\left( {\gamma ,\mathbb{Z}} \right) \gg 1 $. This technique then allows us to establish the estimate for $ n^* $ by substituting \eqref{mdingyi} into \eqref{YLFJ1}: 
\begin{equation}\label{HHH}
	\left| {\int_0^1 {{{\widetilde w}_{p,q}}\left( s \right){e^{2\pi N\left( {\gamma  - {n^ * }} \right)is}}ds} } \right| \lesssim \exp \left( { - {c_1}{{\left( {N{{\rm dist}}\left( {\gamma ,\mathbb{Z}} \right)} \right)}^{\frac{1}{{{\beta _{p,q}}}}}}} \right),
\end{equation}
provided a universal constant $ 0 < {c_1} < {e^{ - 1}}\beta_{p,q} {\left( {2\pi {\lambda ^{ - 1}}} \right)^{{\beta_{p,q} ^{ - 1}}}} $.

As for all other indices $ n \ne {n^ * } $, we observe that a similar exponential upper bound applies. Define $ 2 \leqslant m' \in \mathbb{N}^+$ with
\[	m' \sim \frac{1}{e}{\left( {\frac{\lambda }{{\pi N}}} \right)^{ - \frac{1}{\beta_{p,q} }}}.\]
Therefore, utilizing \eqref{FBJF} (with $\eta =\gamma -n $) and
\[\left| {\gamma  - n} \right| \geqslant \left| {n - {n^ * }} \right| - \left| {\gamma - {n^ * }} \right| > \left| {n - {n^ * }} \right| - \frac{1}{2},\quad {n^ * } \ne n \in \mathbb{Z},\]
 we have 
\begin{align}
\sum\limits_{{n^ * } \ne n \in \mathbb{Z}} {\left| {\int_0^1 {{{\widetilde w}_{p,q}}\left( s \right){e^{2\pi N\left( {\gamma  - n} \right)is}}ds} } \right|}  
& \leqslant \sum\limits_{{n^ * } \ne n \in \mathbb{Z}} {{{\left( {2\pi N\left| {\gamma  - {{{n}}  }} \right|} \right)}^{ - m'}}{{\left\| {{D^{m'}}{{\widetilde w}_{p,q}}\left( x \right)} \right\|}_{{L^1}\left( {0,1} \right)}}} \notag \\
  & \leqslant \sum\limits_{{n^ * } \ne n \in \mathbb{Z}} {{{\left( {2\pi N\left( {\left| {n - {n^ * }} \right| - \frac{1}{2}} \right)} \right)}^{ - m'}}{{\left\| {{D^{m'}}{{\widetilde w}_{p,q}}\left( x \right)} \right\|}_{{L^1}\left( {0,1} \right)}}} \notag \\
   & \leqslant \left( {\sum\limits_{{n^ * } \ne n \in \mathbb{Z}} {{{\left( {2\left| {n - {n^ * }} \right| - 1} \right)}^{ - 2}}} } \right)\left( {{{\left( {\pi N} \right)}^{ - m'}}{{\left\| {{D^{m'}}{{\widetilde w}_{p,q}}\left( x \right)} \right\|}_{{L^1}\left( {0,1} \right)}}} \right)\notag.
\end{align}
Since
		\[\sum\limits_{{n^ * } \ne n \in \mathbb{Z}} {{{\left( {2\left| {n - {n^ * }} \right| - 1} \right)}^{ - 2}}}  = 2\sum\limits_{j = 0}^\infty  {\frac{1}{{{{\left( {2j + 1} \right)}^2}}}}  <  + \infty, \]
		we arrive at
		\begin{equation}\label{fjzh}
			\sum\limits_{{n^ * } \ne n \in \mathbb{Z}} {\left| {\int_0^1 {{{\widetilde w}_{p,q}}\left( s \right){e^{2\pi N\left( {\gamma  - n} \right)is}}ds} } \right|} \lesssim \exp \left( { - {c_2}{N^{\frac{1}{{{\beta _{p,q}}}}}}} \right)	,
		\end{equation}
		analogously to the analysis in  \eqref{HHH}, where $ 0 < {c_2} < {e^{ - 1}}\beta_{p,q} {\left( {\pi {\lambda ^{ - 1}}} \right)^{{\beta_{p,q} ^{ - 1}}}} $. 
 
 Finally, by choosing a sufficiently small universal constant $ 0<c<c_2 $ and combining \eqref{LM221}, \eqref{HHH}, and \eqref{fjzh} (noting that $ {\rm{dist}}\left( {\gamma ,\mathbb{Z}} \right) < 1 $), we complete the proof of Lemma \ref{GJGJ}:
 \[\left|S_{N,k}(\alpha)\right| \leqslant\exp \left( { - c{{\left( {N{\rm dist}\left( {\left\langle {k,\alpha } \right\rangle ,\mathbb{Z}} \right)} \right)}^{\frac{1}{{{\beta _{p,q}}}}}}} \right). \]
\end{proof}
As we shall see in Section \ref{SUB3}, Lemma \ref{GJGJ} will play a pivotal role in deriving the quantitative uniform exponential convergence rate for the weighted Birkhoff averages.

\subsection{Quantitative uniform exponential convergence of the weighted Birkhoff averages}\label{SUB3}
To weaken the impact of nonresonance (or of the small divisors that appear), we need to refine the decomposition of the set $ \Upsilon _\alpha ^ \leqslant  $. In fact, this involves a truncation technique depending on the time length $ N $. For $ N\in \mathbb{N}^+ $ sufficiently large and any $ {\kappa ^{ - 1}}\sigma  < \zeta  < 1 - {\beta _{p,q}}\sigma  $ (with the previous choices of $ \kappa,p $, and $ q $, the parameter $ \zeta $ is indeed well-defined), construct another two subsets $ \Lambda _{\alpha ,N}^ \leqslant ,\Lambda _{\alpha ,N}^ >  $ of $ \Upsilon _\alpha ^ \leqslant  $ as
\begin{align*}
	\Lambda _{\alpha ,N}^ \leqslant :& = \left\{ {k \in \Upsilon _\alpha ^ \leqslant :{\rm dist}\left( {\left\langle {k,\alpha } \right\rangle ,\mathbb{Z}} \right) \leqslant {N^{ - \zeta }}} \right\},\\
	\Lambda _{\alpha ,N}^ > : &= \left\{ {k \in \Upsilon _\alpha ^ \leqslant :{\rm dist}\left( {\left\langle {k,\alpha } \right\rangle ,\mathbb{Z}} \right) > {N^{ - \zeta }}} \right\}.
\end{align*}
They form a  decomposition of $ \Upsilon _\alpha ^ \leqslant  $ as $ N \gg 1 $, because $ \Lambda _{\alpha ,N}^ \leqslant  \cap \Lambda _{\alpha ,N}^ >  = \varnothing  $, and $ \Lambda _{\alpha ,N}^ \leqslant  \cup \Lambda _{\alpha ,N}^ >  = \Upsilon _\alpha ^ \leqslant  $ for $ N > {\epsilon ^{ - {\zeta ^{ - 1}}}} $. In what follows, we establish the corresponding estimates for the three cases $ k \in \Lambda _{\alpha ,N}^ \leqslant  $, $ k \in \Lambda _{\alpha ,N}^ >  $, and $ k \in \Upsilon _\alpha ^ > $.

For the sum over $\Lambda_{\alpha,N}^{\leqslant}$, we deliberately ignore the nonresonance and instead utilize the rapid decay of the Fourier coefficients together with their extra absolute summability in $ k $ to establish an exponential convergence rate. To be more precise, we obtain from \eqref{FKDY} and 
\[{\left| S_{N,k}(\alpha) \right|} \leqslant \sum\limits_{n = 0}^{N - 1} {\left| {{w_{n,N}}} \right|}  = \sum\limits_{n = 0}^{N - 1} {{w_{n,N}}}  = 1\]
 that
\begin{align}
\sum\limits_{k \in \Lambda _{\alpha ,N}^ \leqslant } {{{\left\| {{f_k}} \right\|}_{\mathscr{B}}}{\left| S_{N,k}(\alpha) \right|}}  &\leqslant \sum\limits_{{\rm dist}\left( {\left\langle {k,\alpha } \right\rangle ,\mathbb{Z}} \right) \leqslant {N^{ - \zeta }}} {{{\left\| {{f_k}} \right\|}_{\mathscr{B}}}} \notag \\
& \leqslant \sum\limits_{{\rm dist}\left( {\left\langle {k,\alpha } \right\rangle ,\mathbb{Z}} \right) \leqslant {N^{ - \zeta }}} {\exp \left( { - \frac{1}{{{\rm dist}^{\kappa}\left( {\left\langle {k,\alpha } \right\rangle ,\mathbb{Z}} \right)}}} \right)g\left( {{{\left\| k \right\|}_{{\ell ^1}}}} \right)} \notag \\
& \leqslant \exp \left( { - {N^{\kappa \zeta }}} \right)\sum\limits_{k \in {\mathbb{Z}^d}\setminus \left\{ 0 \right\}} g\left( {{{\left\| k \right\|}_{{\ell ^1}}}} \right)  \notag \\
&\lesssim \exp \left( { - {N^{\kappa \zeta }}} \right),\label{FJ2} 
\end{align}
where the following fact is used in \eqref{FJ2}, with its last inequality following from the earlier assumption on $ g $ in \eqref{GTJ}:
\[\sum\limits_{k \in {\mathbb{Z}^d}\setminus \left\{ 0 \right\}} {g\left( {{{\left\| k \right\|}_{{\ell ^1}}}} \right)}  \lesssim \int_{{[1,+\infty)^d}} {g\left( {{{\left\| r \right\|}_{{\ell ^1}}}} \right)d{r_1} \cdots d{r_d}}  \lesssim \int_1^{ + \infty } {{s^{d - 1}}g\left( s \right)ds}  <  + \infty .\]

For the sum over $\Lambda _{\alpha ,N}^ >$, we adopt a different strategy: we fully utilize the nonresonance with respect to $ N $ while treating regularity in a deliberately loose manner. To be more precise, we derive from Lemma \ref{GJGJ} that 
\begin{align}
\sum\limits_{k \in \Lambda _{\alpha ,N}^ > } {{{\left\| {{f_k}} \right\|}_{\mathscr{B}}}{\left| S_{N,k}(\alpha) \right|}}  &\leqslant \sum\limits_{k \in \Lambda _{\alpha ,N}^ > } {{{\left\| {{f_k}} \right\|}_{\mathscr{B}}}\exp \left( { - c{{\left( {N{\rm dist}\left( {\left\langle {k,\alpha } \right\rangle ,\mathbb{Z}} \right)} \right)}^{\frac{1}{{{\beta _{p,q}}}}}}} \right)} \notag \\
& \leqslant \left( {\sum\limits_{k \in {\mathbb{Z}^d}} {{{\left\| {{f_k}} \right\|}_{\mathscr{B}}}} } \right) \cdot \mathop {\sup }\limits_{k \in \Lambda _{\alpha ,N}^ > } \exp \left( { - c{{\left( {N{\rm dist}\left( {\left\langle {k,\alpha } \right\rangle ,\mathbb{Z}} \right)} \right)}^{\frac{1}{{{\beta _{p,q}}}}}}} \right) \notag \\
&  \lesssim \exp \left( { - c{N^{\frac{{1 - \zeta }}{{{\beta _{p,q}}}}}}} \right). \label{FJ3}
\end{align}
It is worth noting that Lemma  \ref{GJGJ} applies here because $ N{\rm dist}\left( {\left\langle {k,\alpha } \right\rangle ,\mathbb{Z}} \right) \gg 1 $. Conversely, it fails in the preceding case where $ k\in\Lambda_{\alpha,N}^{\leqslant} $.

As for the sum over $ k \in \Upsilon _\alpha ^ > $,  similar to the strategy in \eqref{FJ3}, we have 
\begin{align}
 \sum\limits_{k \in \Upsilon _\alpha ^ > } {{{\left\| {{f_k}} \right\|}_{\mathscr{B}}}{\left| S_{N,k}(\alpha) \right|}} & \leqslant \sum\limits_{k \in \Upsilon _\alpha ^ > } {{{\left\| {{f_k}} \right\|}_{\mathscr{B}}}\exp \left( { - c{{\left( {N{\rm dist}\left( {\left\langle {k,\alpha } \right\rangle ,\mathbb{Z}} \right)} \right)}^{\frac{1}{{{\beta _{p,q}}}}}}} \right)} \notag \\
& \lesssim \left(\sum\limits_{k \in {\mathbb{Z}^d} } {{{\left\| {{f_k}} \right\|}_{\mathscr{B}}}}\right)  \cdot \left(\mathop {\sup }\limits_{k \in \Upsilon _\alpha ^ > } \exp \left( { - c{{\left( {N{\rm dist}\left( {\left\langle {k,\alpha } \right\rangle ,\mathbb{Z}} \right)} \right)}^{\frac{1}{{{\beta _{p,q}}}}}}} \right)\right)\notag \\
& \lesssim \exp \left( { -c {{{\epsilon}^{  \frac{1}{{{\beta _{p,q}}}}}}}{N^{\frac{1}{{{\beta _{p,q}}}}}}} \right).\label{FJ4}
\end{align}
The estimate here, however, is not as loose as the one obtained in \eqref{FJ3}, due to the low regularity in the construction of $ f $.

Finally, we are in a position to derive the quantitative uniform exponential convergence of the weighted Birkhoff averages.
In light of our earlier choice of $ p , q ,\kappa$ and $ \zeta $, it is evident that 
\begin{equation}\label{GY}
	\min \left\{ {\frac{1}{{{\beta _{p,q}}}},\frac{{1 - \zeta }}{{{\beta _{p,q}}}},\kappa \zeta } \right\} > \sigma .
\end{equation}
Therefore, utilizing the decomposition property
\[\Lambda _{\alpha ,N}^ \leqslant  \cup \Lambda _{\alpha ,N}^ >  \cup \Upsilon _\alpha ^ >  = \Upsilon _\alpha ^ \leqslant  \cup \Upsilon _\alpha ^ >  =  {\mathbb{Z}^d}\setminus \left\{ 0 \right\},\quad  N \gg 1\]
and combining  \eqref{FJ1}, \eqref{FJ2}, \eqref{FJ3}, \eqref{FJ4} and \eqref{GY}, we finally arrive at
\begin{align}
 \mathop {\sup }\limits_{x \in \mathbb{T}^d} {\left\| {\sum\limits_{k \in {\mathbb{Z}^d}\setminus \left\{ 0 \right\}} {{f_k}{e^{2\pi i\left\langle {k,x} \right\rangle }}{ S_{N,k}(\alpha)  } } } \right\|_\mathscr{B}} 
  &\leqslant  \sum\limits_{k \in \Lambda _{\alpha ,N}^ \leqslant } { + \sum\limits_{k \in \Lambda _{\alpha ,N}^ > } { + \sum\limits_{k \in\Upsilon _\alpha ^ > } {{{\left\| {{f_k}} \right\|}_\mathscr{B}}{\left| S_{N,k}(\alpha) \right|}} } } \notag \\
& \lesssim   \exp \left( { - {N^{\kappa \zeta }}} \right) + \exp \left( { - c{N^{\frac{{1 - \zeta }}{{{\beta _{p,q}}}}}}} \right) + \exp \left( { -c {\epsilon^{  \frac{1}{{{\beta _{p,q}}}}}}{N^{\frac{1}{{{\beta _{p,q}}}}}}} \right) \notag \\
	& \lesssim  \exp \left( { -\widetilde{c} {N^\sigma }} \right),\quad  N \gg 1,\label{ZUIHOU}
\end{align}
provided with some universal constant $ \widetilde{c}>0 $.
This completes the proof of Theorem \ref{RET1}, since taking logarithms on both sides of \eqref{ZUIHOU} yields the desired estimate as
\[\mathop {\lim \sup }\limits_{N \to  + \infty } {\frac{1}{{{N^\sigma }}}}\log \left( {\mathop {\sup }\limits_{x \in \mathbb{T}^d} {{\left\| {\sum\limits_{n = 0}^{N - 1} {{w_{n,N}}f\left( {{{\mathscr{T}}_\alpha^n}x} \right)}  - \int_{\mathbb{T}^d} {fd\mu } } \right\|}_{\mathscr{B}}}} \right) =-\widetilde{c}<0.\]
 \section*{Acknowledgements} 
 The author would like to thank Prof. Yong Li, whose continuous support and constructive feedback greatly improved this work. The author is also grateful to the editor and the anonymous referee for the valuable suggestions and comments, which led to significant improvements in the manuscript. Z. Tong  was supported by the China Postdoctoral Science Foundation (Grant No. 2025M783102).

\end{document}